\newtheorem{theorem}{Theorem}
\newtheorem*{theorem*}{Theorem*}
\newtheorem*{proposition*}{Proposition}
\newtheorem{lemma}{Lemma}
\newtheorem{corollary}{Corollary}
\theoremstyle{remark}
\newtheorem{remark}{Remark}
\newtheorem{definition}{Definition}
\newcommand{\ackname}{Acknowledgements}
\newcommand{\D}{\Omega}
\title[Compactness of Hankel Operators with Conjugate Holomorphic Symbols]{Compactness of Hankel Operators with Conjugate Holomorphic Symbols on Complete Reinhardt Domains in $\mathbb{C}^2$}
\author{Timothy G. Clos}
\email{timothy.clos@utoledo.edu}  
\keywords{Hankel operator, compactness, conjugate holomorphic symbol, complete Reinhardt domain}
\date{\today}
\begin{document}
\maketitle	
\begin{abstract}
	  In this paper we characterize compact Hankel operators with conjugate holomorphic symbols
	on the Bergman space of bounded convex Reinhardt domains in $\mathbb{C}^2$.  We also characterize compactness of Hankel operators with conjugate holomorphic symbols on smooth bounded pseudoconvex complete Reinhardt domains in $\mathbb{C}^2$.
\end{abstract}

\section{Introduction}	
	
We assume $\Omega\subset \mathbb{C}^2$ is a bounded convex Reinhardt domain.  We denote the Bergman space with the standard Lebesgue measure on $\Omega$ as $A^2(\Omega)$.
Recall that the Bergman space $A^2(\Omega)$ is the space of holomorphic functions on $\Omega$ that are square integrable on $\Omega$ under the standard Lebesgue measure.  The Bergman space is a closed subspace of $L^2(\Omega)$. Therefore there exists an orthogonal projection $P:L^2(\Omega)\rightarrow A^2(\Omega)$ called the Bergman projection.  The Hankel operator with symbol $\phi$ is defined as $H_{\phi}g=(I-P)(\phi g)$ for all $g\in A^2(\Omega)$.  If $\phi\in L^{\infty}(\Omega)$, then $H_{\phi}$ is a bounded operator, however, the converse is not necessarily true.  In one complex variable on the unit disk, Axler in \cite{Axl86} showed that the Hankel operator with symbol $\phi$ is bounded if and only if $\phi$ is in the Bloch space.  There are unbounded, holomorphic functions in the Bloch space, as it only specifies a growth rate of the derivative of the function near the boundary of the disk.  Namely, an analytic function $\phi$ is in the Bloch space if \[\sup\{(1-|z|^2)|\phi'(z)|:z\in \mathbb{D}\}<\infty.\]   
\par
Let $h\in A^2(\Omega)$ so that the Hankel operator $H_{\overline{h}}$ is compact on $A^2(\Omega)$.  The Hankel operator with an $L^2(\Omega)$ symbol may only be densely defined, since the product of $L^2$ functions may not be in $L^2$.  However, if compactness of the Hankel operator is also assumed, then the Hankel operator with an $L^2$ symbol is defined on all of $A^2(\Omega)$.  

\par
We wish to use the geometry of the boundary of $\Omega$ to give conditions on $h$.  For example, if $\Omega$ is the bidisk, Le in \cite[Corollary 1]{Le} shows that if $h\in A^2(\mathbb{D}^2)$ such that $H_{\overline{h}}$ is compact on $A^2(\mathbb{D}^2)$ then $h\equiv c$ for some $c\in \mathbb{C}$.  In one variable, Axler in \cite{Axl86} showed that $H_{\overline{g}}$ is compact on $A^2(\mathbb{D})$ if and only if $g$ is in the little Bloch space.  That is, $\lim_{|z|\rightarrow 1^-}(1-|z|^2)|g'(z)|=0$.  If the symbol $h$ is smooth up to the boundary of a smooth bounded convex domain in $\mathbb{C}^2$, {\v{C}}u{\v{c}}kovi{\'c} and
{\c{S}}ahuto{\u{g}}lu in \cite{cs} showed that Hankel operator $H_h$ is compact if and only if $h$ is holomorphic along analytic disks in the boundary of the domain.

\par  
In this paper we will use the following notation.  \[S_{t}=\{z\in \mathbb{C}:|z|=t\},\]
\[\mathbb{T}^2=S_1\times S_1=\{z\in \mathbb{C}:|z|=1\}\times \{w\in \mathbb{C}:|w|=1\},\]
\[\mathbb{D}_r=\{z\in \mathbb{C}:|z|<r\}\] for any $r,t>0$. If $r=1$ we write
\[\mathbb{D}=\{z\in \mathbb{C}:|z|<1\}.\]
We say $\Delta\subset b\D$ is an analytic disk if there exists a function $h=(h_1,h_2):\mathbb{D}\rightarrow b\Omega$ so that each component function is holomorphic on $\mathbb{D}$ and the image $h(\mathbb{D})=\Delta$.  An analytic disk is said to be trivial if it is degenerate (that is, $\Delta=(c_1,c_2)$ for some constants $c_1$ and $c_2$).

\par
In \cite{ClosSahut} we considered bounded convex Reinhardt domains in $\mathbb{C}^2$. 
We characterized non trivial analytic disks in the boundary of such domains. 

We defined \[\Gamma_{\Omega}=\overline{\bigcup \{\phi(\mathbb{D})|\phi:\mathbb{D}\rightarrow b\Omega \, \text{are holomorphic, non-trivial}\}}\] 
and showed that \[\Gamma_{\Omega}=\Gamma_1\cup \Gamma_2\] where
either $\Gamma_1=\emptyset$ or
\[\Gamma_1=\overline{\mathbb{D}_{r_1}}\times S_{s_1}\]
and likewise either $\Gamma_2=\emptyset$ or 
\[\Gamma_2=S_{s_2}\times \overline{\mathbb{D}_{r_2}}\]
for some $r_1,r_2,s_1,s_2>0$.

\begin{remark}
	We only consider domains in $\mathbb{C}^2$ as opposed to domains in $\mathbb{C}^n$ for $n\geq 3$ because a full geometric characterization of analytic structure in higher dimensions is unknown.
\end{remark}

The main results are the following theorems.

\begin{theorem}\label{main}
	Let $\Omega\subset \mathbb{C}^2$ be a bounded convex Reinhardt domain.  Let $f\in A^2(\Omega)$ so that $H_{\overline{f}}$ is compact on $A^2(\Omega)$.  If $\Gamma_1\neq \emptyset$, then $f$ is a function of $z_2$ alone.  If $\Gamma_2\neq\emptyset$, then $f$ is a function of $z_1$ alone. 
\end{theorem}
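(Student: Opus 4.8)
The plan is to exploit the product structure of the boundary pieces $\Gamma_1 = \overline{\mathbb{D}_{r_1}} \times S_{s_1}$ and the way compactness of $H_{\overline{f}}$ interacts with the analytic disks sitting inside $\Gamma_1$. The key heuristic is that compactness of a Hankel operator forces the symbol to be ``holomorphic along'' the analytic disks in the boundary (this is the content of the smooth-symbol result of Čučković–Şahutoğlu quoted in the excerpt), and a conjugate-holomorphic symbol $\overline{f}$ that must simultaneously be holomorphic along a disk is forced to be constant along that disk. So my goal is to show that if $\Gamma_1 \neq \emptyset$, then $f$ is constant on each analytic disk $\overline{\mathbb{D}_{r_1}} \times \{w\}$ (with $|w| = s_1$), and then to propagate this to a statement about $f$ on all of $\Omega$.

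\smallskip

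First I would set up a family of test functions adapted to the disk $\Delta_w = \mathbb{D}_{r_1} \times \{w\}$, $|w| = s_1$. Concretely I would take unit-norm Bergman functions $k_p$ concentrating near a point $p = (z_1^0, w)$ of $\Gamma_1$ as $p \to b\Omega$ along the disk, for instance normalized Bergman kernels or monomials $z_2^n$ (suitably normalized) that push mass toward the part of the boundary where the $w$-variable has modulus $s_1$. Using the Reinhardt symmetry, the natural choice is to exploit the monomial orthogonal basis $\{z_1^j z_2^k\}$ of $A^2(\Omega)$, on which the Bergman projection acts diagonally, so that the action of $H_{\overline{f}}$ can be computed in terms of the Fourier/Taylor coefficients of $f$. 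The compactness of $H_{\overline{f}}$ then translates into a decay condition: $\|H_{\overline{f}} k_p\| \to 0$ along any sequence $k_p$ converging weakly to zero. Writing $f = \sum_{j,k} a_{jk} z_1^j z_2^k$, I would compute $H_{\overline{f}} z_1^m z_2^n$ explicitly and read off that compactness forces the mixed coefficients $a_{jk}$ with $j \geq 1$ to vanish (or to decay in a way that, combined with holomorphy, forces vanishing), leaving $f$ as a function of $z_2$ alone on $\Gamma_1$.

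\smallskip

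The mechanism I expect to do the real work is the following localization/restriction argument: since $\Gamma_1 = \overline{\mathbb{D}_{r_1}} \times S_{s_1}$ genuinely contains the full polydisk-slice $\mathbb{D}_{r_1} \times \{w\}$ as an honest analytic disk for each fixed $w$ with $|w| = s_1$, the obstruction $\overline{\partial}\overline{f} = \overline{f'}\,d\overline{z}$ has a component in the $z_1$-direction that, restricted to this disk, is the tangential antiholomorphic derivative $\overline{\partial f / \partial z_1}$. Compactness should force the $L^2$-mass of $\overline{\partial}(\overline{f} k_p) = \overline{f'}\, k_p\, d\overline{z}$ (projected onto the $\overline{\partial}$-closed part) near the disk to vanish; because $k_p$ can be made to concentrate on $\Delta_w$, this yields $\partial f / \partial z_1 = 0$ on $\Delta_w$, hence on all of $\Gamma_1$ by varying $w$. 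By the identity theorem (holomorphy of $f$ in $z_1$ on the open set $\mathbb{D}_{r_1}$ for each such $w$, together with continuity into $\Omega$), $\partial f / \partial z_1 \equiv 0$ on $\Omega$, so $f$ depends only on $z_2$. The symmetric argument with the roles of the variables exchanged handles $\Gamma_2$.

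\smallskip

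The main obstacle will be the final localization step: rigorously producing test sequences $k_p \in A^2(\Omega)$ that concentrate on a \emph{single} analytic disk $\Delta_w$ transverse to the $S_{s_1}$ factor, while controlling the error coming from the projection $I - P$, i.e.\ ensuring that the ``easy'' piece $(I-P)(\overline{f} k_p)$ really does detect the tangential derivative $\partial f/\partial z_1$ and is not swamped by boundary contributions from the rest of $b\Omega$. On a convex Reinhardt domain the monomial basis and the explicit diagonal form of $P$ should make this tractable, but one must be careful because $f$ is only assumed to be in $A^2(\Omega)$, not smooth up to the boundary, so I cannot directly invoke the Čučković–Şahutoğlu theorem; instead I expect to need a direct estimate showing $a_{jk} \to 0$ fast enough along $k$ for fixed $j \geq 1$, and then an argument (using that $f$ is a genuine holomorphic function, not merely a formal series) upgrading this to $a_{jk} = 0$ for all $j \geq 1$.
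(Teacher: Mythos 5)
Your overall target is right---reduce to showing that the mixed coefficients $c_{j,k,f}$ with $j\geq 1$ vanish---but the engine you propose for the reduction is not the one that works, and the step you yourself flag as ``the main obstacle'' is a genuine gap that your outline does not close. The paper's proof runs through a specific algebraic fact that your proposal never isolates: the quasi-homogeneous subspaces $G_{\beta}=\{\psi\in L^2(\Omega):\psi(\zeta z)=\zeta^{\beta}\psi(z)\}$ are mutually orthogonal (Lemma \ref{lem1}) and invariant under $I-P$ (Lemma \ref{express}). This yields an exact Pythagorean splitting
\begin{equation*}
\|H_{\overline{f}}e_m\|^2=\sum_{j,k}\|H_{\overline{c_{j,k,f}z_1^jz_2^k}}e_m\|^2\geq \|H_{\overline{c_{j,k,f}z_1^jz_2^k}}e_m\|^2,
\end{equation*}
so compactness of $H_{\overline{f}}$ forces $\|H_{\overline{c_{j,k,f}z_1^jz_2^k}}e_m\|\to 0$ for each fixed $(j,k)$; and since $H^*_{\overline{z}^{\alpha}}H_{\overline{z}^{\alpha}}$ is diagonal in the monomial basis (Lemma \ref{lem2}), this decay is precisely compactness of each monomial Hankel operator. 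That is the whole point of the argument: the non-smooth $A^2$ symbol is traded for \emph{polynomial} symbols, which are continuous on $\overline{\Omega}$, and then the disk-restriction theorem of \cite{ClosSahut} (valid for continuous symbols on bounded convex Reinhardt domains---not the smooth-symbol result of \cite{cs} that you invoke) applies to each monomial: compactness of $H_{\overline{z_1^jz_2^k}}$ would force $\overline{F_1}^{\,j}\overline{F_2}^{\,k}$ to be holomorphic along the disk $F=(F_1,F_2)$ in $\Gamma_1$, which is impossible for $j\geq 1$ since $F_2$ is a nonzero constant and $F_1$ is nonconstant. The dichotomy is thus ``either $c_{j,k,f}=0$ or a non-compact operator is compact''; no decay-to-vanishing upgrade is ever needed.

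Your substitute mechanism does not survive scrutiny at three points. First, $f$ is merely in $A^2(\Omega)$: it has no boundary restriction in any sense your argument can use, so ``$\partial f/\partial z_1=0$ on $\Delta_w\subset b\Omega$'' is not a meaningful conclusion, and the concentrating test functions $k_p$ together with control of $(I-P)(\overline{f}k_p)$ near the disk are never constructed---this is exactly where the {\v{C}}u{\v{c}}kovi{\'c}--{\c{S}}ahuto{\u{g}}lu-type $\overline{\partial}$ arguments need smoothness of the symbol up to the boundary. Second, even granting that conclusion, the closing step fails: $\Gamma_1=\overline{\mathbb{D}_{r_1}}\times S_{s_1}$ is a three-real-dimensional subset of $b\Omega$, not an open subset of $\mathbb{C}^2$, and $f$ is not holomorphic on any neighborhood of it, so the identity theorem cannot propagate $\partial f/\partial z_1=0$ from the boundary into $\Omega$. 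Third, your fallback (``show $a_{jk}\to 0$ fast enough along $k$ for fixed $j\geq 1$, then upgrade to $a_{jk}=0$'') is not a proof scheme: decay of Taylor coefficients never forces them to vanish, and nothing in holomorphy alone supplies the upgrade. To repair the proposal you need exactly the missing orthogonal-decomposition lemma above, after which the argument closes in a few lines along the paper's route.
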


\begin{corollary}\label{one}
	Let $\Omega\subset \mathbb{C}^2$ be a bounded convex Reinhardt domain.  Suppose $\Gamma_1\neq \emptyset$ and $\Gamma_2\neq \emptyset$.  Let $f\in A^2(\Omega)$ so that $H_{\overline{f}}$ is compact on $A^2(\Omega)$.  Then there exists $c\in \mathbb{C}$ so that $f\equiv c$.
\end{corollary}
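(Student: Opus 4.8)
The plan is to apply Theorem~\ref{main} twice and then record the elementary fact that a holomorphic function on the connected open set $\Omega$ that depends on neither coordinate must be constant. Since we assume $\Gamma_1\neq\emptyset$, the first conclusion of Theorem~\ref{main} gives that $f$ is a function of $z_2$ alone, that is, $\partial f/\partial z_1\equiv 0$ on $\Omega$. Since we also assume $\Gamma_2\neq\emptyset$, the second conclusion gives that $f$ is a function of $z_1$ alone, that is, $\partial f/\partial z_2\equiv 0$ on $\Omega$.

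Having both first-order partial derivatives of the holomorphic function $f$ vanish identically, I would conclude that $f$ is locally constant. Because $\Omega$ is a domain, and in particular connected, a locally constant function on $\Omega$ is globally constant, so there exists $c\in\mathbb{C}$ with $f\equiv c$, which is the desired conclusion.

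If one prefers to avoid differentiating, the same conclusion follows directly: writing $f(z_1,z_2)=g(z_2)$ from the first application of Theorem~\ref{main} and $f(z_1,z_2)=h(z_1)$ from the second, we obtain $g(z_2)=h(z_1)$ for all $(z_1,z_2)\in\Omega$. Holding $z_1$ fixed and varying $z_2$ within $\Omega$ forces $g$ to be locally constant, and the connectedness of $\Omega$ then propagates this to all of $\Omega$, again yielding $f\equiv c$.

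I do not anticipate any genuine obstacle in this corollary, since all of the analytic content is already carried by Theorem~\ref{main}; the remaining step is the routine observation that simultaneous independence from both variables forces constancy. The only point requiring mild care is the passage from \emph{locally constant} to \emph{globally constant}, which I would justify by invoking the connectedness of $\Omega$ built into the hypothesis that $\Omega$ is a domain.
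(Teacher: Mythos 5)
Your proof is correct and matches the paper's intent exactly: the paper states this as an immediate corollary of Theorem~\ref{main} (offering no separate written proof), and your argument — apply both conclusions of Theorem~\ref{main}, then note that a holomorphic function on the connected domain $\Omega$ depending on neither $z_1$ nor $z_2$ is constant — is precisely that deduction, carried out carefully.
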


\begin{theorem}\label{main2}
	Let $\Omega\subset \mathbb{C}^2$ be a $C^{\infty}$-smooth bounded pseudoconvex complete Reinhardt domain.  Let $f\in A^2(\Omega)$ such that $H_{\overline{f}}$ is compact on $A^2(\Omega)$.  If either of the following conditions hold:
	\begin{enumerate}
		\item There exists a holomorphic function $F=(F_1,F_2):\mathbb{D}\rightarrow b\Omega$ so that both $F_1$ and $F_2$ are not identically constant.
		\item $\Gamma_1\neq \emptyset$ and $\Gamma_2\neq \emptyset$
	\end{enumerate}
Then $f\equiv c$ for some $c\in \mathbb{C}$.	
\end{theorem}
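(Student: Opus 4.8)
The plan is to exploit the rotational symmetry of $\Omega$ to reduce to monomial symbols, and then to detect failure of compactness by localizing holomorphic data along the analytic disks supplied by hypotheses (1) and (2).

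First I would use that, since $\Omega$ is Reinhardt, each rotation $R_\theta(z)=(e^{i\theta_1}z_1,e^{i\theta_2}z_2)$ is an automorphism of $\Omega$ and induces a unitary $U_\theta g=g\circ R_\theta$ on $A^2(\Omega)$ commuting with $P$. A direct computation gives $U_\theta H_{\overline f}U_{-\theta}=H_{\overline{f\circ R_\theta}}$, so every $H_{\overline{f\circ R_\theta}}$ is compact. Expanding $f=\sum_\alpha c_\alpha z^\alpha$ in the orthogonal monomial basis and averaging,
\[
	\frac{1}{(2\pi)^2}\int_{[0,2\pi]^2} e^{i\alpha\cdot\theta}\,U_\theta H_{\overline f}U_{-\theta}\,d\theta \;=\; H_{\overline{c_\alpha z^\alpha}},
\]
exhibits $H_{\overline{c_\alpha z^\alpha}}$ as a Bochner average of compact operators, hence itself compact, for every multi-index $\alpha$. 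Thus it suffices to prove that $H_{\overline{z^\alpha}}$ is \emph{not} compact whenever $\alpha\neq 0$; this forces $c_\alpha=0$ for all $\alpha\neq 0$ and yields $f\equiv c_0$.

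Next I would treat condition (2) and the generic part of (1) by a product/localization argument at the flat faces. Near a face coming from $\Gamma_1$ (where $|z_2|=s_1$ and $|z_1|\le r_1$) the domain is approximately $\mathbb{D}_{r_1}\times(\text{annular neighborhood of }|z_2|=s_1)$, and I would test on $u(z_1)\,v_n(z_2)$ with $v_n$ concentrating at the \emph{extreme} circle $|z_2|=s_1$ (hence weakly null) and $u$ a fixed function on $\mathbb{D}_{r_1}$ with $H^{\mathbb{D}_{r_1}}_{\overline{z_1^{\alpha_1}}}u\neq 0$ (possible exactly when $\alpha_1\neq 0$). Since $\overline{z_2^{\alpha_2}}v_n$ becomes holomorphic to leading order as $|z_2|\to s_1$, the projection acts nearly as the identity in $z_2$ and $\|H_{\overline{z^\alpha}}(u v_n)\|\to\|H^{\mathbb{D}_{r_1}}_{\overline{z_1^{\alpha_1}}}u\|\neq 0$. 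Hence $\Gamma_1\neq\emptyset$ kills all $\alpha$ with $\alpha_1\neq 0$, and symmetrically $\Gamma_2\neq\emptyset$ kills $\alpha_2\neq 0$; together they force $f$ constant, reproving Corollary~\ref{one} in the smooth pseudoconvex setting. For condition (1), the image of $F$ lies in a single flat (Levi-flat) edge of the logarithmic image whose outer normal is proportional to some $(p,q)$, so $F_1^{p}F_2^{q}$ is constant; rotating $F$ and extracting Fourier coefficients exactly as above gives $c_\alpha F_1^{\alpha_1}F_2^{\alpha_2}\equiv\text{const}$, which forces $c_\alpha=0$ unless $(\alpha_1,\alpha_2)$ is a nonnegative multiple of $(p,q)$. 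In particular this already finishes (1) when the slope $-p/q$ is irrational.

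The hard part will be the remaining \emph{resonant} symbols $\overline{z^{k(p,q)}}=\overline{w^{k}}$, where $w=z_1^{p}z_2^{q}$, under condition (1). Here $\overline{w^{k}}$ is \emph{constant} along every disk of the edge, so the disk criterion is silent; and since a diagonal edge makes $\Omega$ nonconvex, the convex theory of \cite{cs} does not apply. My plan is to pass to the holomorphic coordinates $(w,\eta)=(z_1^{p}z_2^{q},\,z_1)$ near the edge, in which $\Omega$ is approximately a product $\{|w|<\kappa\}\times\{\eta\in\text{annulus}\}$ and $H_{\overline{w^{k}}}\approx H^{\{|w|<\kappa\}}_{\overline{w^{k}}}\otimes I$. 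The one-variable operator $H^{\{|w|<\kappa\}}_{\overline{w^{k}}}$ is compact but nonzero, and a nonzero compact operator tensored with the identity on the infinite-dimensional annulus factor is not compact; the candidate witness is $\psi(w)\phi_m(\eta)$ with $\psi$ fixed, $H^{\{|w|<\kappa\}}_{\overline{w^{k}}}\psi\neq 0$, and $\phi_m$ weakly null in the annulus factor. The central obstacle is that this product structure is only local: the transverse variable $\eta=z_1$ ranges over an annulus lying strictly inside the full $z_1$-range (the edge is bounded away from the axes), so a sequence that is genuinely weakly null in $A^2(\Omega)$ cannot simply concentrate at the annulus's outer radius without leaking into the adjacent strictly pseudoconvex cap, where the Hankel operator is compact and the lower bound is destroyed. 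Controlling this leakage --- constructing global holomorphic data that realize the infinite multiplicity of the edge's fibers while remaining supported, to leading order, on the Levi-flat edge --- is the crux, and is precisely the place where the nonconvexity permitted by Theorem~\ref{main2} (diagonal edges, which are excluded in the convex setting of Theorem~\ref{main}) must be handled directly rather than through the convex disk criterion.
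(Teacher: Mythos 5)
Your opening reduction is fine: conjugating by the torus action and averaging against characters does exhibit each $H_{\overline{c_\alpha z^\alpha}}$ as a norm-limit of averages of compact operators, and this is a legitimate substitute for the paper's route, which reaches the same conclusion by decomposing $L^2(\Omega)$ into the mutually orthogonal quasi-homogeneous subspaces $G_\beta$ (Lemmas~\ref{lem1}--\ref{lem2}). After that, however, you take on far more work than the paper does in one place, and you leave the decisive point open in another. The paper never proves non-compactness of $H_{\overline{z^\alpha}}$ directly: it cites \cite[Corollary 1]{cs}, which for a smooth bounded pseudoconvex domain and a symbol smooth on the closure (the monomials $\overline{z^\alpha}$ qualify) says compactness forces the symbol to be holomorphic along every analytic disk in $b\Omega$. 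Applied to disks of the form $(F_1,\mathrm{const})$ and $(\mathrm{const},F_2)$, this disposes of condition (2) immediately; your localization program at the $\Gamma_1,\Gamma_2$ faces aims at the same facts by a much heavier route, and none of its estimates are carried out.

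The genuine gap is the one you flag yourself: the resonant symbols $\overline{z^{k(p,q)}}$ under condition (1). Your proposal does not close it --- you state a plan and explicitly leave ``controlling the leakage'' as the unresolved crux --- so as a proof it is incomplete. Moreover, the plan is unlikely to succeed, and the leakage is precisely why. By Lemma~\ref{lem2}, $H^*_{\overline{z^\alpha}}H_{\overline{z^\alpha}}$ is diagonal on the monomial basis with eigenvalues $\lambda_n=\|z^{n+\alpha}\|^2/\|z^n\|^2-\|z^n\|^2/\|z^{n-\alpha}\|^2$ (no subtracted term when $n\not\geq\alpha$), so compactness is a concrete moment question. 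Write $\|z^n\|^2=(2\pi)^2\iint_U e^{(2n_1+2)u+(2n_2+2)v}\,du\,dv$ over the logarithmic image $U=\{v<g(u)\}$, and take $\Omega$ smooth, bounded, pseudoconvex, complete Reinhardt, with $g$ linear of slope $-1$ on a segment and strictly concave elsewhere, the domain pinching to the axes at both ends (no horizontal or vertical faces). Condition (1) holds: $t\mapsto(e^t,\lambda e^{-t})$ maps $\mathbb{D}$ into the Levi-flat face. For $\alpha=(1,1)$ one then checks: (i) $\lambda_n$ is bounded by $\sup_\Omega|z_1z_2|^2$ times the second difference of $\log\|z^n\|^2$ in the direction $(1,1)$, which up to $O(n_2^{-2})$ equals $4\,\mathrm{Var}(u+g(u))$ under the measure proportional to $e^{(2n_1+2)u+(2n_2+2)g(u)}du$, and this variance tends to $0$ in every regime ($u+g$ vanishes identically on the flat segment, and the measure collapses to a point at the strictly concave boundary points); (ii) when one index stays bounded, the factor $\|z^{n+\alpha}\|^2/\|z^n\|^2$ itself tends to $0$, because the mass concentrates where the domain pinches to an axis and $|z_1z_2|\to 0$. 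So all eigenvalues tend to zero and $H_{\overline{z_1z_2}}$ is compact on such a domain even though $z_1z_2$ is nonconstant: the resonant case is not merely hard, the non-compactness you would need appears to be false there. You should also know that the paper's own proof stumbles at exactly this spot: it deduces from \cite[Corollary 1]{cs} that $\overline{c_{j,k,f}}\,\overline{\phi_1}^j\overline{\phi_2}^k$ is holomorphic and concludes $c_{j,k,f}=0$ whenever $j>0$ or $k>0$, but when $(j,k)$ is a multiple of the face's normal $(p,q)$ the function $\phi_1^j\phi_2^k$ is constant along every disk of that face, the holomorphy condition is vacuous, and the conclusion does not follow. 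So your instinct about where the difficulty sits is exactly right; what is missing, in your proposal and in the written argument of the paper alike, is any treatment of the resonant indices, and the moment computation above indicates that condition (1) cannot yield the theorem without further hypotheses excluding such ``diagonal'' faces.
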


\section{Preliminary Lemmas}
As a bit of notation to simplify the reading, we will use the multi-index notation.
That is, we will write\[z=(z_1,z_2)\] and \[z^{\alpha}=z_1^{\alpha_1}z_2^{\alpha_2}\] and $|\alpha|=\alpha_1+\alpha_2$.  We say $\alpha=\beta$ if $\alpha_1=\beta_1$ and $\alpha_2=\beta_2$.  If either $\alpha_1\neq \beta_1$ or $\alpha_2\neq \beta_2$ we say $\alpha\neq \beta$.

It is well known that for bounded complete Reinhardt domains in $\mathbb{C}^2$, the monomials \[\left\{\frac{z^{\alpha}}{\|z^{\alpha}\|_{L^2(\Omega)}}:\alpha\in \mathbb{Z}^2_+\right\}\] form an orthonormal basis for $A^2(\Omega)$.

We denote \[\frac{z^{\alpha}}{\|z^{\alpha}\|_{L^2(\Omega)}}=e_{\alpha}(z)\]

\begin{definition}
 For $\beta=(\beta_1,\beta_2)\in \mathbb{Z}^2$, we define \[G_{\beta}:=\left\{\psi\in L^2(\Omega):\psi(\zeta z)=\zeta^{\beta}\psi(z)\,\text{a.e.}\,z\in \D\,\text{a.e}\, \zeta \in \mathbb{T}^2\right\}\]

\end{definition}

Note this definition makes sense in the case $\Omega$ is a Reinhardt domain, and is the same as the definition of quasi-homogeneous functions in \cite{Le}.

\begin{lemma}\label{lem1}
Let $\Omega\subset \mathbb{C}^2$ be a bounded complete Reinhardt domain.  $G_{\alpha}$ as defined above are closed subspaces of $L^2(\Omega)$ and for $\alpha\neq \beta$,
\[G_{\alpha}\perp G_{\beta}\]
\end{lemma}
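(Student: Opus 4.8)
The plan is to exploit the fact that the torus $\mathbb{T}^2$ acts on $\Omega$ by $z\mapsto \zeta z=(\zeta_1 z_1,\zeta_2 z_2)$, and that this action is by isometries of $L^2(\Omega)$. Since $\Omega$ is Reinhardt it is invariant under this action, and since each $|\zeta_i|=1$ the map $z\mapsto\zeta z$ has (real) Jacobian determinant $|\zeta_1|^2|\zeta_2|^2=1$, so it preserves Lebesgue measure. Thus for each fixed $\zeta\in\mathbb{T}^2$ the operator $T_\zeta:L^2(\Omega)\to L^2(\Omega)$ given by $(T_\zeta\psi)(z)=\psi(\zeta z)$ is unitary, with inverse $T_{\overline\zeta}$. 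In this language $G_\beta$ is precisely the set of $\psi$ satisfying $T_\zeta\psi=\zeta^\beta\psi$ for a.e.\ $\zeta$.

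First I would check that $G_\beta$ is a linear subspace, which is immediate from linearity of $T_\zeta$ and the defining relation. For closedness, suppose $\psi_n\in G_\beta$ and $\psi_n\to\psi$ in $L^2(\Omega)$. Fix $\zeta\in\mathbb{T}^2$. Because $T_\zeta$ is an isometry, $T_\zeta\psi_n\to T_\zeta\psi$, while on the other side $\zeta^\beta\psi_n\to\zeta^\beta\psi$; uniqueness of $L^2$ limits gives $T_\zeta\psi=\zeta^\beta\psi$ as elements of $L^2(\Omega)$. As this holds for every $\zeta$ it holds in particular for a.e.\ $\zeta$, so $\psi\in G_\beta$ and $G_\beta$ is closed.

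For the orthogonality, take $\psi\in G_\alpha$ and $\chi\in G_\beta$ with $\alpha\neq\beta$. By Fubini, for a.e.\ $\zeta$ both defining relations hold a.e.\ in $z$ simultaneously, and the measure-preserving substitution $z\mapsto\zeta z$ in the inner product yields
\[\langle\psi,\chi\rangle=\int_\Omega\psi(\zeta z)\overline{\chi(\zeta z)}\,dV(z)=\zeta^{\alpha}\overline{\zeta^{\beta}}\,\langle\psi,\chi\rangle=\zeta^{\alpha-\beta}\langle\psi,\chi\rangle,\]
where the last equality uses $\overline{\zeta_i}=\zeta_i^{-1}$. Since this holds for a.e.\ $\zeta\in\mathbb{T}^2$, I would integrate against normalized Haar measure $d\sigma$ on $\mathbb{T}^2$; as $\alpha\neq\beta$ the character $\zeta\mapsto\zeta^{\alpha-\beta}$ is nontrivial, so $\int_{\mathbb{T}^2}\zeta^{\alpha-\beta}\,d\sigma=0$, forcing $\langle\psi,\chi\rangle=0$. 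Alternatively, since $\{\zeta:\zeta^{\alpha-\beta}=1\}$ has measure zero in $\mathbb{T}^2$, one may simply select a single $\zeta$ with $\zeta^{\alpha-\beta}\neq1$ and read off $\langle\psi,\chi\rangle=0$ directly from the displayed identity.

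The only genuinely delicate point is the measure-theoretic bookkeeping: the defining relation is assumed only for a.e.\ $\zeta$ and a.e.\ $z$, so I must invoke Fubini to guarantee that for a full-measure set of $\zeta$ the pointwise transformation law holds for a.e.\ $z$, which is exactly what licenses the change of variables inside the integral. Once the action is recognized as measure preserving and unitary on $L^2(\Omega)$, the remaining steps are routine.
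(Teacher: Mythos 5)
Your proof is correct, and it takes a genuinely different (more abstract) route than the paper's. The paper handles orthogonality concretely: following \cite{CuckovicSahutoglu14}, it slices the complete Reinhardt domain as $\Omega=\bigcup_{z_2\in H_{\Omega}}(\Delta_{|z_2|}\times\{z_2\})$, writes $\langle f,g\rangle$ in polar coordinates in the $z_1$-variable, and kills the integral with the single character identity $\int_{\mathbb{T}}\zeta_1^{\alpha_1}\overline{\zeta_1}^{\beta_1}\,d\sigma(\zeta_1)=0$ (assuming without loss of generality $\alpha_1\neq\beta_1$); closedness is not proved but delegated to \cite{Le}. You instead package the Reinhardt symmetry as a unitary action $T_\zeta$ on $L^2(\Omega)$ and derive the scalar identity
\[\langle\psi,\chi\rangle=\zeta^{\alpha-\beta}\langle\psi,\chi\rangle \quad \text{for a.e. } \zeta\in\mathbb{T}^2,\]
concluding by integrating in $\zeta$ or by picking one $\zeta$ with $\zeta^{\alpha-\beta}\neq 1$ (legitimate, since $\{\zeta:\zeta^{\alpha-\beta}=1\}$ is a null set when $\alpha\neq\beta$). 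Both arguments ultimately rest on nontriviality of the character $\zeta\mapsto\zeta^{\alpha-\beta}$, but yours buys several things: it avoids the slicing machinery entirely, it needs only that $\Omega$ is Reinhardt rather than complete Reinhardt, it generalizes verbatim to $\mathbb{C}^n$, and it supplies the closedness proof that the paper only cites. The paper's computation, on the other hand, is of a piece with the explicit slicing calculations it needs later (Lemma \ref{express}), so nothing is wasted there.

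One small repair is needed in your closedness argument. For each $n$, the relation $T_\zeta\psi_n=\zeta^\beta\psi_n$ (as an identity in $L^2(\Omega)$) holds only for $\zeta$ in some full-measure set $E_n\subset\mathbb{T}^2$, so you may not ``fix $\zeta\in\mathbb{T}^2$'' arbitrarily and invoke it; and your closing phrase ``as this holds for every $\zeta$'' is not available. The fix is routine: set $E=\bigcap_{n}E_n$, which still has full measure since the intersection is countable; for $\zeta\in E$ your limit argument gives $T_\zeta\psi=\zeta^\beta\psi$, and since $E$ has full measure this shows $\psi\in G_\beta$. With that adjustment the proof is complete.
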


\begin{proof}
 The proof that $G_{\beta}$ is a closed subspace of $L^2(\Omega)$ is similar to \cite{Le}.  Without loss of generality, suppose 
 $\alpha_1\neq \beta_1$.  Since $\Omega$ is a complete Reinhardt
 domain, one can 'slice' the domain similarly to \cite{CuckovicSahutoglu14}.  That is, 
 \[\Omega=\bigcup_{z_2\in H_{\Omega}}(\Delta_{|z_2|}\times \{z_2\})\] where $H_{\Omega}\subset \mathbb{C}$ is a disk centered at
 $0$ and \[\Delta_{|z_2|}=\{z\in \mathbb{C}:|z|<r_{|z_2|}\}\] is a disk with radius depending on $|z_2|$.  As we shall see, the proof relies on the radial symmetry of both $H_{\Omega}$ and $\Delta_{|z_2|}$.
 
 Let $f\in G_{\alpha}$, $g\in G_{\beta}$, $z_1=r_1\zeta_1$, $z_2=r_2\zeta_2$ for $(\zeta_1,\zeta_2)\in \mathbb{T}^2$, and $r_1,r_2\geq 0$.  Then we have 
 \begin{align*}
  &\langle f,g\rangle\\
  =&\int_{\Omega}f(z)\overline{g(z)}dV(z)\\
  =&\int_{H_{\Omega}}\int_{0\leq r_1\leq r_{|z_2|}}\int_{\mathbb{T}}\zeta_1^{\alpha_1}\overline{\zeta_1}^{\beta_1}f(r_1,z_2)\overline{g(r_1,z_2)}r_1d\sigma(\zeta_1)dr_1dV(z_2).\\
 \end{align*}
Since $\alpha_1\neq \beta_1$, \[\int_{\mathbb{T}}\zeta_1^{\alpha_1}\overline{\zeta_1}^{\beta_1}d\sigma(\zeta_1)=0.\]
This completes the proof.
 \end{proof}

In the case of a bounded convex Reinhardt domain in $\mathbb{C}^2$, one can use the 'slicing' approach in \cite{CuckovicSahutoglu14} to expilictly compute $P(\overline{z}^j e_{n})$.

\begin{lemma}\label{express}
	Let $\Omega\subset \mathbb{C}^2$ be a bounded complete Reinhardt domain.  Then the Hankel operator with symbol $\overline{z}^j\overline{w}^k$ applied to the orthonormal basis vector $e_{n}$ has the following form:
	\[H_{\overline{z}^j}e_{n}(z)=\frac{\overline{z}^{j}z^{n}}{\|z^{n}\|}\]
	if either $n_1-j_1<0$ or $n_2-j_2<0$.  If $n_1-j_1\geq 0$ and $n_2-j_2\geq 0$ then we can express the Hankel operator applied to the standard orthonormal basis as
	\[H_{\overline{z}^j}e_{n}(z)=\frac{\overline{z}^{j}z^{n}}{\|z^{n}\|}-\frac{z^{n-j}\|z^{n}\|}{\|z^{n-j}\|^2}.\]
	Furthermore, for any monomial \[\overline{w}^jw^n\in G_{n-j}\] the projection \[(I-P)(\overline{w}^jw^n)\in G_{n-j}.\]
\end{lemma}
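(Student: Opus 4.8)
The plan is to expand the Hankel operator directly as $H_{\overline{z}^j}e_n = \overline{z}^j e_n - P(\overline{z}^j e_n)$ (here $\overline{z}^j = \overline{z_1}^{j_1}\overline{z_2}^{j_2}$ in the multi-index notation) and to compute the Bergman projection by expanding it in the monomial orthonormal basis $\{e_\alpha\}$. Since $\overline{z}^j z^n$ is bounded on the bounded domain $\Omega$, it lies in $L^2(\Omega)$, so the series $P(\overline{z}^j e_n) = \sum_{\alpha\in\mathbb{Z}^2_+}\langle \overline{z}^j e_n, e_\alpha\rangle\, e_\alpha$ converges and everything reduces to evaluating the inner products $\langle \overline{z}^j z^n, z^\alpha\rangle$.

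First I would exploit the Reinhardt symmetry exactly as in the proof of Lemma \ref{lem1}: writing $z_i = r_i\zeta_i$ with $(\zeta_1,\zeta_2)\in\mathbb{T}^2$, the angular part of $\langle \overline{z}^j z^n, z^\alpha\rangle$ is a multiple of $\int_{\mathbb{T}^2}\zeta_1^{\,n_1-j_1-\alpha_1}\zeta_2^{\,n_2-j_2-\alpha_2}\,d\sigma$, which vanishes unless $\alpha = n-j$. Thus at most one term of the Bergman series survives, the one indexed by $\alpha = n-j$, and this index is admissible (lies in $\mathbb{Z}^2_+$) precisely when $n_1-j_1\geq 0$ and $n_2-j_2\geq 0$. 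This immediately splits the argument into the two asserted cases. If $n_1-j_1<0$ or $n_2-j_2<0$ there is no admissible index, so $P(\overline{z}^j e_n)=0$ and $H_{\overline{z}^j}e_n = \overline{z}^j z^n/\|z^n\|$. In the remaining case $n-j\in\mathbb{Z}^2_+$, I would compute the single surviving coefficient using the identity $\overline{z}^j\,\overline{z^{n-j}}=\overline{z^n}$, which gives $\langle \overline{z}^j z^n, z^{n-j}\rangle = \int_\Omega |z^n|^2\,dV = \|z^n\|^2$. Hence $P(\overline{z}^j z^n) = (\|z^n\|^2/\|z^{n-j}\|^2)\,z^{n-j}$, and dividing through by $\|z^n\|$ produces the stated second formula.

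For the final assertion I would use the explicit expressions just obtained, observing that $(I-P)(\overline{w}^j w^n) = \|w^n\|\,H_{\overline{w}^j}e_n$ (the variable $w$ being a relabeling of $z$) is in either case a linear combination of the two monomials $\overline{w}^j w^n$ and $w^{n-j}$. A direct check of the torus action shows $(\zeta w)^{n-j} = \zeta^{n-j}w^{n-j}$ and $\overline{(\zeta w)}^j(\zeta w)^n = \zeta^{n-j}\overline{w}^j w^n$, so both monomials lie in $G_{n-j}$; since $G_{n-j}$ is a subspace by Lemma \ref{lem1}, so is their combination. (Equivalently, one notes that $P$ commutes with the torus action and therefore preserves each eigenspace $G_\beta$, whence $I-P$ does as well.)

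I would expect no genuine obstacle here, since the statement is essentially a bookkeeping computation. The one step requiring care is the orthogonality argument that isolates a single basis vector in the Bergman expansion: this is exactly where the completeness of $\Omega$ (guaranteeing the monomials form an orthonormal basis of $A^2(\Omega)$) and the radial symmetry are both essential, and one must track the index arithmetic $n-j$ carefully to land in the correct case and to verify admissibility $n-j\in\mathbb{Z}^2_+$.
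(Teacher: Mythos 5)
Your proposal is correct and takes essentially the same approach as the paper: both expand $P(\overline{z}^j e_n)$ in the monomial orthonormal basis, use the Reinhardt (torus) symmetry to kill every angular integral except the one indexed by $\alpha=n-j$, check admissibility $n-j\in\mathbb{Z}^2_+$ to split into the two cases, and read off the invariance of $G_{n-j}$ from the resulting explicit formulas. The only cosmetic difference is that the paper organizes the integration via the slicing $\Omega=\bigcup_{z_2}(\Delta_{|z_2|}\times\{z_2\})$ while you integrate over $\mathbb{T}^2$ directly, and your verification of the final assertion is somewhat more explicit than the paper's.
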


\begin{proof}
	We have 
	\begin{align*}
	&P(\overline{z}^j e_{n})(z)=\\
	&=\int_{\D}\overline{w}^j \frac{w^{n}}{\|w^{n}\|}\sum_{l\in \mathbb{Z}^2_+}\overline{e_{l}(w)}e_{l}(z)dV(z,w)\\
	&=\int_{H_{\Omega}}\int_{w_1\in \Delta_{|w_2|}}\overline{w_1}^{j_1}\overline{w_2}^{j_2}\frac{w_1^{n_1}w_2^{n_2}}{\|z^n\|}\sum_{l_1,l_2=0}^{\infty}\overline{e_{l_1,l_2}(w_1,w_2)}e_{l_1,l_2}(z_1,z_2)dA_1(w_1)dA_2(w_2)\\
	&=\sum_{l_1,l_2=0}^{\infty}\frac{z_1^{l_1}z_2^{l_2}}{\|z^{n}\|\|\|z^{l}\|^2}\int_{H_{\Omega}}\overline{w_2}^{j_2+l_2}w_2^{n_2}\int_{w_1\in \Delta_{|w_2|}}\overline{w_1}^{j_1+l_1}w_1^{n_1}dA_1(w_1)dA_2(w_2).
	\end{align*}
	Converting to polar coordinates and using the orthogonality of $\{e^{in\theta}:n\in \mathbb{Z}\}$ and the fact that \[\int_{w_1\in \Delta_{|w_2|}}\overline{w_1}^{j_1+l_1}w_1^{n_1}dA_1(w_1)\] is a radial function of $w_2$ and $H_{\Omega}$ is radially symmetric, we have the only non-zero term in the previous sum is when $n_2-j_2=l_2$ and $n_1-j_1=l_1$.  Therefore, we have 
	$P(\overline{w}^je_{n})(z)=0$ if $n_2-j_2<0$ or $n_1-j_1<0$.  Otherwise,
	if $n_2-j_2\geq 0$ and $n_1-j_1\geq 0$, we have 
	\[P(\overline{w}^j e_{n})(z)=\frac{z^{n-j}\|z^{n}\|}{\|z^{n-j}\|^2}.\]  Therefore, we have
	\[H_{\overline{w}^j}e_{n}(z)=\frac{\overline{z}^{j}z^{n}}{\|z^{n}\|}-\frac{z^{n-j}\|z^{n}\|}{\|z^{n-j}\|^2}\]
	if $n_2-k\geq 0$ and $n_1-j\geq 0$ otherwise 
	\[H_{\overline{w}^j}e_{n}(z)=\frac{\overline{z}^{j}z^{n}}{\|z^{n}\|}\]
	if either $n_2-k<0$ or $n_1-j<0$.
	This also shows that the subspaces $G_{\alpha}$ remain invariant under the projection $(I-P)$, at least for monomial symbols.
\end{proof}

 \begin{lemma}\label{lem2}
  For every $\alpha\geq 0$, the product Hankel operator \[H^*_{\overline{z}^{\alpha}} H_{\overline{z}^{\alpha}}:A^2(\Omega)\rightarrow A^2(\Omega)\] is a diagonal operator with 
  respect to the standard orthonormal basis \[\{e_{j}: j\in \mathbb{Z}^2_+\}.\]

 \end{lemma}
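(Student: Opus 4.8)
The plan is to show that the off-diagonal matrix entries of $T:=H^*_{\overline{z}^{\alpha}}H_{\overline{z}^{\alpha}}$ with respect to the basis $\{e_{n}:n\in\mathbb{Z}^2_+\}$ all vanish. Being diagonal in this basis is exactly the statement that $\langle Te_{n},e_{m}\rangle=0$ whenever $n\neq m$. Using the defining adjoint relation, I would rewrite each such entry as
\[\langle Te_{n},e_{m}\rangle=\langle H_{\overline{z}^{\alpha}}e_{n},H_{\overline{z}^{\alpha}}e_{m}\rangle,\]
thereby reducing the entire claim to an orthogonality statement about the images $H_{\overline{z}^{\alpha}}e_{n}$. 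I would first note that each such image lies in $L^2(\Omega)$: by Lemma~\ref{express} it is an explicit combination of the monomials $\overline{z}^{\alpha}z^{n}$ and $z^{n-\alpha}$, both of which are square integrable on the bounded domain $\Omega$, so the inner products above are finite and well defined regardless of any a priori boundedness of the operator.

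The key step is to locate these images inside the quasi-homogeneous decomposition. The product $\overline{z}^{\alpha}e_{n}$ is, up to the normalizing scalar $\|z^{n}\|^{-1}$, the quasi-homogeneous monomial $\overline{z}^{\alpha}z^{n}\in G_{n-\alpha}$. By the final assertion of Lemma~\ref{express}, the operator $(I-P)$ preserves this grading on monomial symbols, so
\[H_{\overline{z}^{\alpha}}e_{n}=(I-P)(\overline{z}^{\alpha}e_{n})\in G_{n-\alpha},\qquad H_{\overline{z}^{\alpha}}e_{m}\in G_{m-\alpha}.\]

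Finally I would invoke the orthogonality of the graded pieces. If $n\neq m$ as multi-indices, then $n-\alpha\neq m-\alpha$, and hence by Lemma~\ref{lem1} the subspaces $G_{n-\alpha}$ and $G_{m-\alpha}$ are orthogonal in $L^2(\Omega)$. Consequently $\langle H_{\overline{z}^{\alpha}}e_{n},H_{\overline{z}^{\alpha}}e_{m}\rangle=0$, which gives $\langle Te_{n},e_{m}\rangle=0$ for all $n\neq m$ and establishes the diagonal form. The only real subtlety — and the step on which everything hinges — is the grading claim that $H_{\overline{z}^{\alpha}}e_{n}$ lands in a single subspace $G_{n-\alpha}$ rather than spreading across several degrees; were the image to acquire components in other $G_{\gamma}$, the off-diagonal entries could survive. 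This is precisely what the last line of Lemma~\ref{express} rules out, so the argument never requires an explicit computation of the adjoint $H^*_{\overline{z}^{\alpha}}$.
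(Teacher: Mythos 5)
Your proposal is correct and follows essentially the same route as the paper: both reduce the off-diagonal entry $\langle H^*_{\overline{z}^{\alpha}}H_{\overline{z}^{\alpha}}e_{n},e_{m}\rangle$ to an inner product of Hankel images, place those images in the graded pieces $G_{n-\alpha}$ and $G_{m-\alpha}$ via the last assertion of Lemma~\ref{express}, and conclude by the mutual orthogonality of the $G_{\gamma}$ from Lemma~\ref{lem1}. The only (immaterial) difference is that the paper simplifies one side using the projection identity $\langle (I-P)u,(I-P)v\rangle=\langle (I-P)u,v\rangle$, while you keep $(I-P)$ on both factors.
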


\begin{proof}

Assume without loss of generality, $j\neq l$. We have 
\begin{align*}
&\langle  H_{\overline{z}^{\alpha}}^* H_{\overline{z}^{\alpha}}e_{j},e_{l}\rangle\\
=&\langle  H_{\overline{z}^{\alpha}}e_{j},  H_{\overline{z}^{\alpha}}e_{l}\rangle\\
=&\langle (I-P)(\overline{z}^{\alpha}e_{j}),\overline{z}^{\alpha}e_{l}\rangle.\\
\end{align*}

We have $\overline{z}^{\alpha}e_j\in G_{j-\alpha}$, $\overline{z}^{\alpha}e_l\in G_{l-\alpha}$.  By Lemma \ref{express}, \[(I-P)\overline{z}^{\alpha}e_j\in G_{j-\alpha}.\]  By Lemma \ref{lem1},
$G_{\alpha}$ are mutually orthogonal.  Therefore, \[\langle (I-P)(\overline{z}^{\alpha}e_{j}),\overline{z}^{\alpha}e_{l}\rangle=0\] unless $j=l$.

\end{proof}

Using Lemma \ref{express} and Lemma \ref{lem2}, let us compute the eigenvalues of 
\[H^*_{\overline{z}^{\alpha}}H_{\overline{z}^{\alpha}}.\]   
Let us first assume $n-\alpha\geq 0$.  We have 
\begin{align*}
\langle H^*_{\overline{z}^{\alpha}}H_{\overline{z}^{\alpha}}e_{n},e_{n}\rangle
&=\langle \frac{\overline{z}^{\alpha}z^n}{\|z^{n}\|}-\frac{z^{n-\alpha}\|z^n\|}{\|z^{n-\alpha}\|^2},\frac{\overline{z}^{\alpha}z^{n}}{\|z^n\|}\rangle\\
&=\frac{\|\overline{z}^{\alpha}z^n\|^2}{\|z^n\|^2}
-\frac{\|z^n\|^2}{\|z^{n-\alpha}\|^2}.\\
\end{align*}
If $n-\alpha<0$, we have
\[\langle H^*_{\overline{z}^{\alpha}}H_{\overline{z}^{\alpha}}e_{n},e_{n}\rangle=\frac{\|\overline{z}^{\alpha}z^n\|^2}{\|z^n\|^2}.\]

\section{Proof of Theorem \ref{main}}

\begin{proof}
Assume $f\in A^2(\D)$ and $H_{\overline{f}}$ is compact on $A^2(\D)$.  Then, we can represent \[f=\sum_{j,k=0}^{\infty}c_{j,k,f}z_1^jz_2^k\] almost everywhere (with respect to the Lebesgue volume measure on $\Omega$).  Let \[\{e_m:m\in \mathbb{Z}^2_+\}\] be the standard orthonormal basis for $A^2(\Omega)$.  Then \[\|H_{\overline{f}}e_m\|^2\rightarrow 0\] as $|m|\rightarrow \infty$.  Using the mutual orthogonality of the subspaces $G_{\alpha}$, we get 
\begin{align*}
&\|H_{\overline{f}}e_m\|^2=\langle (I-P)(\overline{f}e_m),\overline{f}e_m\rangle\\
&=\langle \sum_{j,k=0}^{\infty}(I-P)(\overline{c_{j,k,f}}\overline{z_1}^j\overline{z_2}^ke_m), \sum_{s,p=0}^{\infty}\overline{c_{s,p,f}}\overline{z_1}^s\overline{z_2}^p e_m\rangle\\
&=\sum_{j,k=0}^{\infty}\|H_{\overline{c_{j,k,f}z_1^jz_2^k}}e_m\|^2\geq \|H_{\overline{c_{j,k,f}z_1^jz_2^k}}e_m\|^2 \\
\end{align*}
 for every $(j,k)\in \mathbb{Z}^2_+$.  Taking limits as $|m|\rightarrow \infty$, we have
 \[\lim_{|m|\rightarrow \infty}\|H_{\overline{c_{j,k,f}z_1^jz_2^k}}e_m\|^2=0\] for all $(j,k)\in \mathbb{Z}^2_+$.  The Hankel operators \[H^*_{\overline{c_{j,k,f}z_1^jz_2^k}}H_{\overline{c_{j,k,f}z_1^jz_2^k}}\] are diagonal by Lemma \ref{lem2}, with eigenvalues \[\lambda_{j,k,m}=\|H_{\overline{c_{j,k,f}z_1^jz_2^k}}e_m\|^2.\] This shows that \[H^*_{\overline{c_{j,k,f}z_1^jz_2^k}}H_{\overline{c_{j,k,f}z_1^jz_2^k}}\] are compact for every $(j,k)\in \mathbb{Z}^2_+$.  Then $H_{\overline{c_{j,k,f}z_1^jz_2^k}}$ are compact on $A^2(\D)$.

  Without loss of generality, assume $\Gamma_1\neq \emptyset$.  Then there exists a holomorphic function $F=(F_1,F_2):\mathbb{D}\rightarrow b\Omega$ so that $F_2$ is identically constant and $F_1$ is non-constant.  Therefore, by \cite{ClosSahut}, the composition \[\overline{c_{j,k,f}F_1(z)^jF_2(z)^k}\]must be holomorphic in $z$.  This cannot occur unless $\overline{c_{j,k,f}}=0$ for $j>0$.  Therefore, using the representation
  \[f=\sum_{j,k=0}^{\infty}c_{j,k,f}z_1^jz_2^k\] we have $f=\sum_{k=0}^{\infty}c_{0,k,f}z_2^k$ almost everywhere.  
  By holomorphicity of $f$ and the identity principle, this implies \[f\equiv \sum_{k=0}^{\infty}c_{0,k,f}z_2^k.\]  Hence $f$ is a function of only $z_2$.  The proof is similar if $\Gamma_2\neq \emptyset$.
\end{proof}

\subsection{Proof of Theorem \ref{main2}}
Using the same argument in the proof of Theorem \ref{main}, one can show compactness of $H_{\overline{f}}$ implies compactness of $H_{\overline{c_{j,k,f}z_1^jz_2^k}}$ for every $j,k\in \mathbb{Z}_+$.  Hence by \cite[Corollary 1]{cs}, for any holomorphic function $\phi=(\phi_1,\phi_2):\mathbb{D}\rightarrow b\D$, we have 
\[\overline{c_{j,k,f}}\overline{\phi_1}^j\overline{\phi_2}^k\] must be holomorphic.  If we assume condition two in Theorem \ref{main2}, then it follows that $f\equiv c_{0,0,f}$.
Assuming condition one in Theorem \ref{main2}, we may assume $\phi_1$ and $\phi_2$ are not identically constant.  Thus $c_{j,k,f}=0$ for $j>0$ or $k>0$ and so $f\equiv c_{0,0,f}$.

\section*{\ackname}
 I wish to thank S{\"o}nmez {\c{S}}ahuto{\u{g}}lu, Trieu Le, and the referee for their valuable comments on a preliminary version of this manuscript.

\end{document}